\documentclass{article}

\usepackage{amsfonts}
\usepackage{amssymb}
\usepackage{eucal}
\date{}

{}

\newtheorem{theorem}{{\bf Theorem}}[section]
\newtheorem{lemma}[theorem]{{\bf Lemma}}
\newtheorem{corollary}[theorem]{{\bf Corollary}}
\newtheorem{remark}[theorem]{{\bf Remark}}

\newtheorem{definition}[theorem]{{\bf Definition}}

\newtheorem{example}[theorem]{{\bf  Example}}

\begin{document}

\begin{center}
{\Large \bf Star versions of Hurewicz spaces}\\

\medskip
{\normalsize Sumit Singh}\\
\footnotesize{Department of Mathematics,  University of Delhi, New Delhi-110007, India}\\
\footnotesize{sumitkumar405@gmail.com}\\

\medskip
{Ljubi\v sa D.R. Ko\v cinac}\\

\footnotesize {University of Ni\v s,
Faculty of Sciences and Mathematics, 18000 Ni\v s, Serbia}\\
\footnotesize{lkocinac@gmail.com }
\end{center}

\begin{abstract}
A space $X$ is said to have the set star Hurewicz property if for
each nonempty subset $A$ of $X$ and each sequence $(\mathcal{U}_n: n
\in \mathbb{N})$ of collections of sets open in $X$ such that for
each $n\in \mathbb N$, $\overline{A} \subset \cup \mathcal{U}_n$,
there is a sequence $(\mathcal{V}_n: n \in \mathbb{N})$ such that
for each $n \in \mathbb{N}$, $\mathcal{V}_n$ is a finite subset of
$\mathcal{U}_n$ and for each $x \in A$, $x \in  {\rm St}(\cup
\mathcal{V}_n, \mathcal{U}_n)$ for all but finitely many $n$. In
this paper, we investigate the relationships among set star
Hurewicz, set strongly star Hurewicz and other related covering
properties and study the topological properties of these topological
spaces.
\end{abstract}

\section{Introduction and Preliminaries}

In \cite{arh}, Arhangel'skii defined a cardinal  function $sL$, and
spaces  $X$ such that $sL(X)=\omega$ we call s-Lindel\"{o}f: a space
$X$ is \emph{s-Lindel\"{o}f} if for each subset $A$ of $X$ and each
cover $\mathcal{U}$ of $\overline{A}$ by sets open in $X$ there is a
countable set $\mathcal{V} \subset \mathcal{U}$ such that $A \subset
\overline{\cup \mathcal{V}}$. Motivated by this definition, and
modifying it,  Ko\v{c}inac and Konca \cite{koc-kon} considered new
types of selective covering properties called set covering
properties. Later on, Ko\v{c}inac, Konca and Singh in \cite{KKS}
studied set star covering properties using the star operator, and,
in particular, defined set star Hurewicz and set strongly star
Hurewicz properties.

In this paper, we investigate the relationship among set star
Hurewicz, set strongly star Hurewicz and other related properties.
Further, we study the topological properties of these two classes of
spaces.

Throughout the paper we use standard topological terminology and
notation as in \cite{engelking}. By ``a space" we mean ``a
topological space", $\mathbb{N}$ denotes the set of natural numbers,
and an open cover $\mathcal{U} $ of a subset $A \subset X$ means
elements of $\mathcal{U}$ are open in $X$ and $A \subset \cup
\mathcal{U} = \cup\{U:U \in \mathcal U\}$. The cardinality of a set
$A$ is denoted by $|A|$. Let $\omega$ denote the first infinite
cardinal, $\omega_1$ the first uncountable cardinal, $\mathfrak{c}$
the cardinality of the set of real numbers.  As usual, a cardinal is
an initial ordinal and an ordinal is the set of smaller ordinals. A
cardinal is often viewed as a space with the usual order topology.
If $A$ is a subset of a space $X$ and $\mathcal U$ is a collection
of subsets of $X$, then the \emph{star} of $A$ with respect to
$\mathcal{U}$ is the set ${\rm St}(A,\mathcal{U}) := \bigcup\{U \in
\mathcal{U}: U \cap A \neq \emptyset\}$; ${\rm St}(x,\mathcal U) =
{\rm St}(\{x\},\mathcal U)$.

We first recall the classical notions of spaces which are used in
this paper.

In 1925, Hurewicz \cite{HW,HWU} introduced the Hurewicz covering
property for a space $X$ in the following way:

A space $X$ is said to have the \emph{Hurewicz  property} if each
sequence $(\mathcal{U}_n: n \in \mathbb{N})$ of open covers of $X$
there is a sequence $(\mathcal{V}_n: n \in \mathbb{N})$ such that
for each $n \in \mathbb{N} $, $ \mathcal{V}_n$ is a finite subset of
$\mathcal{U}_n$ and for each $x \in X$, $x \in \cup \mathcal{V}_n$
for all but finitely many $n$.

Ko\v{c}inac \cite{koc-sm, koc-sm2, BCK}, introduced the star
versions of the Hurewicz covering property using the star operator
in the following way:

(1) A space $X$ is said to have the \emph{star Hurewicz property}
(shortly, $\mathsf{SH}$ property)  if each sequence $(\mathcal{U}_n:
n \in \mathbb{N})$ of  open covers of $X$, there is a sequence
$(\mathcal{V}_n: n \in \mathbb{N})$ such that for each $n \in
\mathbb{N}$, $\mathcal{V}_n$ is a finite subset of $\mathcal{U}_n$
and for each $x \in X$, $x \in  {\rm St}(\cup
\mathcal{V}_n,\mathcal{U}_n)$ for all but finitely many $n$.

(2) A space $X$ is said to have the \emph{strongly star Hurewicz
property} (shortly, $\mathsf{SSH}$ property) if for each sequence
$(\mathcal{U}_n: n \in \mathbb{N})$ of  open covers of $X$, there is
a sequence $(F_n: n \in \mathbb{N})$ of finite subsets of $X$ such
that for each $x \in X$, $x \in {\rm St}(F_n,\mathcal{U}_n)$ for all
but finitely many $n$.

In what follows we will use Theorem \ref{th1.8} below.

Recall that a collection $\mathcal{A}$ of infinite subsets of
$\omega$ is said to be \emph{almost disjoint} if the sets $A \cap B$
are finite for all distinct elements $A, B \in \mathcal{A}$. For an
almost disjoint family $\mathcal{A}$, put $\psi(\mathcal{A})=
\mathcal{A} \cup \omega$ and topologize $\psi(\mathcal{A})$  as
follows: all points in $\omega$ are isolated, and for each $A \in
\mathcal{A}$ and each finite set $F \subset \omega$, $\{A\} \cup (A
\setminus F)$ is a basic open neighborhood of $A$. The spaces of
this type are called Isbell-Mr\'{o}wka $\psi$-spaces \cite{BM,
engelking,mrowka} or $\psi (\mathcal{A})$ spaces.

\begin{theorem} {\rm (\cite{BM})} \label{th1.8}
Let $\mathcal{A}$ be an almost disjoint family of infinite subsets
of $\omega$ and let $\psi(\mathcal A) = \omega \cup \mathcal{A}$ be
the Isbell-Mr\'{o}wka space. Then:
\begin{enumerate}
\item $\psi(\mathcal A)$ is strongly star Hurewicz if and only if $|\mathcal{A}| < \mathfrak{b}$;
\item If $|\mathcal{A}| = \mathfrak{c}$, then $X$ is not star Hurewicz.
\end{enumerate}
\end{theorem}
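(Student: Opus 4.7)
The key observation behind both parts is that every basic open neighborhood of an $A \in \mathcal{A}$ in $\psi(\mathcal{A})$ has the form $\{A\} \cup (A \setminus F)$ with $F \subseteq \omega$ finite, so a choice of $U^A_n \in \mathcal{U}_n$ with $A \in U^A_n$ canonically produces a function $\phi_A \in \omega^\omega$, and ``a finite $F_n \subseteq \omega$ hits $U^A_n$'' becomes a $\leq^*$-domination statement. This is what bridges covers of $\psi(\mathcal{A})$ to the cardinal invariants $\mathfrak{b}$ and $\mathfrak{d}$.

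For part~(1), $(\Leftarrow)$: set $\phi_A(n) := \min(A \cap U^A_n)$. Since $|\{\phi_A : A \in \mathcal{A}\}| < \mathfrak{b}$, fix $g \in \omega^\omega$ with $\phi_A \leq^* g$ for each $A$ and take $F_n := \{0, 1, \ldots, \max(g(n), n)\}$: eventually $\phi_A(n) \in F_n \cap U^A_n$ (so $A \in {\rm St}(F_n, \mathcal{U}_n)$), and $k \in F_n$ for each $k \in \omega$ eventually (so $k \in {\rm St}(F_n, \mathcal{U}_n)$). For $(\Rightarrow)$, contrapositively assume $|\mathcal{A}| \geq \mathfrak{b}$, pick an unbounded family $\{h_\alpha : \alpha < \mathfrak{b}\} \subseteq \omega^\omega$ and distinct $\{A_\alpha : \alpha < \mathfrak{b}\} \subseteq \mathcal{A}$ with increasing enumerations $A_\alpha = \{a^\alpha_0 < a^\alpha_1 < \cdots\}$, and define
\[
\mathcal{U}_n := \bigl\{\{A_\alpha\} \cup \{a^\alpha_k : k \geq h_\alpha(n)\} : \alpha < \mathfrak{b}\bigr\} \cup \bigl\{\{A\} \cup A : A \in \mathcal{A} \setminus \{A_\alpha : \alpha < \mathfrak{b}\}\bigr\} \cup \bigl\{\{k\} : k \in \omega\bigr\}.
\]
Any candidate finite $F_n \subseteq \psi(\mathcal{A})$ may be taken to lie in $\omega$ (swap each $A$-point for an $\omega$-point of the corresponding neighborhood; the star only grows). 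Setting $g(n) := \max F_n$, unboundedness of $\{h_\alpha\}$ yields $\alpha$ with $h_\alpha(n) > g(n)$ for infinitely many $n$; for such $n$ the only neighborhood of $A_\alpha$ in $\mathcal{U}_n$ sits in $\{A_\alpha\} \cup [h_\alpha(n), \infty)$ and is disjoint from $F_n \subseteq [0, h_\alpha(n))$, so $A_\alpha \notin {\rm St}(F_n, \mathcal{U}_n)$ for these $n$, defeating the set strongly star Hurewicz property.

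For part~(2), the plan is to use a cover of the same shape but indexed by a dominating family $\{d_\alpha : \alpha < \mathfrak{d}\} \subseteq \omega^\omega$, which exists because $\mathfrak{d} \leq \mathfrak{c} = |\mathcal{A}|$, and diagonalize against any candidate sequence of finite $\mathcal{V}_n \subseteq \mathcal{U}_n$. Only countably many $A$-indexed neighborhoods are ever used, so the set $B \subseteq \mathfrak{d}$ of appearing indices is countable; aim for $\alpha_0 \notin B$. For each $n$, the contribution of the $A_\beta$-neighborhoods ($\beta \in B$) in $\mathcal{V}_n$ to the star at $A_{\alpha_0}$ is contained in the finite sets $A_{\alpha_0} \cap A_\beta$ (almost disjointness), and the contribution of the singletons $\{k\} \in \mathcal{V}_n$ is restricted by the fact that any $k \in \omega$ occupies position at most $k$ in $A_{\alpha_0}$. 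Absorbing both contributions into a single function $g \in \omega^\omega$ extracted from $\max\bigl(\bigcup \mathcal{V}_n \cap \omega\bigr)$, the dominating property of $\{d_\alpha\}$ produces $\alpha_0 \notin B$ such that $d_{\alpha_0}$ strictly exceeds $g$ for infinitely many $n$; at those $n$, the unique neighborhood of $A_{\alpha_0}$ in $\mathcal{U}_n$ escapes $\bigcup \mathcal{V}_n$, giving $A_{\alpha_0} \notin {\rm St}(\bigcup \mathcal{V}_n, \mathcal{U}_n)$ and defeating star Hurewicz. The main obstacle is the absorption step: although $\bigcup \mathcal{V}_n$ may be an infinite subset of $\psi(\mathcal{A})$, its effective ``catching position'' inside any $A_{\alpha_0}$ must be shown to be controlled by a single function of $n$ (not depending on the eventual choice of $\alpha_0$), so that one dominator from $\{d_\alpha\}$ suffices.
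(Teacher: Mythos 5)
This theorem is quoted in the paper from \cite{BM} without proof, so the comparison is with the standard argument there. Your part (1) is correct and is essentially that standard argument: the function $\phi_A(n)=\min(A\cap U^A_n)$ reduces the forward implication to boundedness of a family of size $<\mathfrak{b}$, and the converse is the usual diagonalization against an unbounded family using shrinking basic neighborhoods, with the legitimate reduction of each $F_n$ to a finite subset of $\omega$.

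Part (2), however, has a genuine gap, and it is exactly the one you flag at the end: the ``absorption step'' cannot be carried out, and no repair within the dominating-family framework will work. The trace of $\bigcup\mathcal{V}_n$ on $\omega$ is an infinite set (a finite union of cofinite subsets of members $A_\beta$, $\beta\in S_n$, plus finitely many singletons), so there is no $\max\bigl(\bigcup\mathcal V_n\cap\omega\bigr)$ to extract $g$ from. What actually has to be avoided by the $n$-th neighborhood of $A_{\alpha_0}$ is the finite set $A_{\alpha_0}\cap\bigcup_{\beta\in S_n}A_\beta$, and the positions of its elements inside $A_{\alpha_0}$ depend on $\alpha_0$ and can be arbitrarily large as $\alpha_0$ varies (for a fixed $n$ and fixed $S_n$). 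So the function your dominator must exceed depends on the very index $\alpha_0$ you are trying to choose afterwards --- a circularity that a dominating family cannot break. This is precisely why the hypothesis is $|\mathcal{A}|=\mathfrak{c}$ rather than $|\mathcal{A}|\ge\mathfrak{d}$: the known proof enumerates, in $\mathfrak{c}$ steps, \emph{all} candidate plays $\sigma=\bigl((S_n,T_n)\bigr)_{n}$ with $S_n\in[\mathfrak{c}]^{<\omega}$, $T_n\in[\omega]^{<\omega}$ (each play repeated $\mathfrak{c}$ times), assigns a play $\sigma^\xi$ to each $A_\xi$, and defines the $n$-th neighborhood of $A_\xi$ to be $\{A_\xi\}\cup\bigl(A_\xi\setminus(T^{\sigma^\xi}_n\cup\bigcup\{A_\beta:\beta\in S^{\sigma^\xi}_n,\ \beta\neq\xi\})\bigr)$, which is legal by almost disjointness. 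Given an actual play, one then picks $\xi$ realizing that play with $\xi\notin\bigcup_n S_n$ (possible since the latter set is countable and the play occurs $\mathfrak{c}$ times), and $A_\xi$ misses ${\rm St}(\cup\mathcal V_n,\mathcal U_n)$ for every $n$. In short: the neighborhood of $A_\xi$ must be tailored in advance to the specific finite sets of indices the opponent will use, not merely made ``long enough'' relative to a function of $n$; your part (2) needs to be replaced by this enumeration argument.
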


Recently, Ko\v{c}inac and Konca \cite{koc-kon} defined the set
selection properties (and their weak versions). See also the paper
\cite{koc-add, SS} related to these properties.

In \cite{KKS}, Ko\v{c}inac, Konca and Singh defined (general
versions of) set star selection properties, in particular the set
star Hurewicz and set strongly star Hurewicz spaces.

\begin{definition}\rm
A space $X$ is said to have the
\begin{enumerate}
\item \emph{set star Hurewicz property} (shortly, {\sf set}-$\mathsf{SH}$
property) if for each nonempty set $A$ of $X$ and each sequence
$(\mathcal{U}_n: n \in \mathbb{N})$ of collections of sets open in
$X$ such that $\overline{A} \subset \cup \mathcal{U}_n$, $n \in
\mathbb{N}$, there is a sequence $(\mathcal{V}_n: n \in \mathbb{N})$
such that for each $n \in \mathbb{N}$, $\mathcal{V}_n$ is a finite
subset of $\mathcal{U}_n$ and for each $x \in A$, $x \in  {\rm St}(
\cup \mathcal{V}_n, \mathcal{U}_n)$ for all but finitely many $n$.

\item \emph{set strongly star Hurewicz property} (shortly,
{\sf set}-$\mathsf{SSH}$ property) if for each nonempty subset $A$
of $X$ and each sequence $(\mathcal{U}_n: n \in \mathbb{N})$ of
collections of sets open in $X$ such that $\overline{A} \subset \cup
\mathcal{U}_n$, $n \in \mathbb{N}$, there is a sequence $(F_n: n \in
\mathbb{N})$ of finite subsets of $\overline{A}$ such that for each
$x \in A$, $x \in {\rm St}(F_n, \mathcal{U}_n)$ for all but finitely
many $n$.
\end{enumerate}
\end{definition}

\begin{definition}\rm (\cite{douwen, matveev})
A space $X$ is said to be:
\begin{enumerate}
\item \emph{starcompact} (shortly, $\mathsf{SC}$) if for
each open cover $\mathcal{U}$ of  $X$, there is a finite subset
$\mathcal{V}$  of $\mathcal{U}$ such that $X  = {\rm St}( \cup
\mathcal{V}, \mathcal{U})$.

\item \emph{strongly starcompact}  (shortly,
$\mathsf{SSC}$) if for each open cover $\mathcal{U}$ of  $X$, there
is a finite subset  $F$  of $X$ such that $X = {\rm St}(F,
\mathcal{U})$.
\end{enumerate}
\end{definition}

In a similar way, Ko\v{c}inac, Konca and Singh \cite{KKS} considered
the following spaces.

\begin{definition}\rm
A space $X$ is said to be:
\begin{enumerate}
\item \emph{set starcompact} (shortly, {\sf set}-$\mathsf{SC}$) if for each
nonempty subset $A$ of $X$ and each open cover $\mathcal{U}$ of
$\overline{A}$, there is a finite subset $\mathcal{V}$  of
$\mathcal{U}$ such that $A = {\rm St}( \cup \mathcal{V},
\mathcal{U}) \cap A$.

\item  \emph{set strongly starcompact} (shortly, {\sf set}-$\mathsf{SSC}$) if
for each nonempty subset $A$ of $X$ and each open cover
$\mathcal{U}$ of $\overline{A}$, there is a finite subset  $F$  of
$\overline{A}$ such that $A = {\rm St}(F, \mathcal{U}) \cap A$.
\end{enumerate}
\end{definition}

It is clear, by the definition, that every set strongly starcompact
space is set starcompact.

\begin{theorem} \label{1.6} {\rm (\cite{KKS})}
Every countably compact space is set strongly starcompact.
\end{theorem}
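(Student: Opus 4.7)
The plan is to argue by contradiction, exploiting the characterization of countable compactness via accumulation points of infinite sets, together with the fact that a closed subspace of a countably compact space is countably compact.

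Suppose $X$ is countably compact but fails to be set strongly starcompact. Then I would fix a witness: a nonempty $A \subseteq X$ and an open cover $\mathcal{U}$ of $\overline{A}$ such that for every finite $F \subseteq \overline{A}$ there is some point of $A$ missed by ${\rm St}(F,\mathcal{U})$. In particular, for every finite $F \subseteq A$ the set $A \setminus {\rm St}(F,\mathcal{U})$ is nonempty, so I can recursively choose points $x_0, x_1, x_2, \ldots \in A$ with
$$x_n \in A \setminus {\rm St}(\{x_0, x_1, \ldots, x_{n-1}\},\mathcal{U}) \qquad (n \in \mathbb{N}).$$
By construction no single member of $\mathcal{U}$ contains two distinct $x_i, x_j$: if $x_i, x_j \in U \in \mathcal{U}$ with $i < j$, then $x_j \in {\rm St}(\{x_i\},\mathcal{U}) \subseteq {\rm St}(\{x_0,\ldots,x_{j-1}\},\mathcal{U})$, contradicting the choice of $x_j$. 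In particular, the chosen points are pairwise distinct, so $\{x_n : n \in \mathbb{N}\}$ is an infinite subset of $A$.

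Now $\overline{A}$ is a closed subspace of the countably compact space $X$, hence itself countably compact. Therefore the infinite set $\{x_n : n \in \mathbb{N}\} \subseteq \overline{A}$ has an accumulation point $p \in \overline{A}$. Since $\mathcal{U}$ covers $\overline{A}$, pick $U \in \mathcal{U}$ with $p \in U$. As $U$ is an open neighborhood of an accumulation point of $\{x_n\}$, it contains infinitely many of the $x_n$, which directly contradicts the property established above that no element of $\mathcal{U}$ contains two distinct $x_i$'s.

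The only delicate point, and the one I would be careful about, is the recursive construction: it must be verified that at each stage the set $A \setminus {\rm St}(\{x_0,\ldots,x_{n-1}\},\mathcal{U})$ is nonempty, which is exactly what the failure hypothesis grants (taking $F = \{x_0,\ldots,x_{n-1}\} \subseteq A \subseteq \overline{A}$). Beyond that, the argument is a straightforward application of the Bolzano--Weierstrass-type characterization of countable compactness inside the closed set $\overline{A}$.
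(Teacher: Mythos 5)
Your proof is correct and is the standard argument for this result (the paper itself gives no proof, citing \cite{KKS}; the argument there, as for the classical ``countably compact $\Rightarrow$ strongly starcompact'' in \cite{douwen}, is exactly this one: build a sequence no member of the cover meets twice, then use countable compactness of $\overline{A}$). The only point to tighten is that you should invoke an $\omega$-accumulation (cluster) point rather than a mere accumulation point --- if $p\notin\{x_n:n\in\mathbb N\}$ an ordinary accumulation point only guarantees one $x_m$ in $U$ --- but countable compactness does supply $\omega$-accumulation points for countably infinite sets without any separation axiom, so the proof stands.
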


\begin{corollary} \label{1.7} {\rm (\cite{KKS})}
Every countably compact space is set starcompact.
\end{corollary}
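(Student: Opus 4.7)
The plan is to derive this corollary directly from Theorem \ref{1.6} together with the elementary implication noted in the paragraph just before Theorem \ref{1.6}: every set strongly starcompact space is set starcompact. No genuine use of countable compactness is needed beyond that already made in Theorem \ref{1.6}; the statement is a two-step chain of implications.

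Concretely, I would first invoke Theorem \ref{1.6} to conclude that an arbitrary countably compact space $X$ is set strongly starcompact. Then I would apply the implication {\sf set}-$\mathsf{SSC}$ $\Rightarrow$ {\sf set}-$\mathsf{SC}$. Since the paper merely asserts this implication as ``clear,'' I would briefly justify it for the reader: given a nonempty $A \subset X$ and an open cover $\mathcal{U}$ of $\overline{A}$, the set-$\mathsf{SSC}$ property supplies a finite $F \subset \overline{A}$ with $A = {\rm St}(F,\mathcal{U}) \cap A$; since $\mathcal{U}$ covers $\overline{A} \supset F$, for each $x \in F$ pick some $U_x \in \mathcal{U}$ with $x \in U_x$ and set $\mathcal{V} = \{U_x : x \in F\}$, a finite subcollection of $\mathcal{U}$. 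Then $F \subset \cup \mathcal{V}$, so ${\rm St}(F,\mathcal{U}) \subset {\rm St}(\cup \mathcal{V},\mathcal{U})$, and consequently $A \subset {\rm St}(\cup \mathcal{V},\mathcal{U}) \cap A \subset A$, giving $A = {\rm St}(\cup \mathcal{V},\mathcal{U}) \cap A$, as required.

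There is no real obstacle here: the corollary is essentially a formal consequence of Theorem \ref{1.6} and the monotonicity of the star operator in its ``center'' argument. The only point worth spelling out is the inclusion ${\rm St}(F,\mathcal{U}) \subset {\rm St}(\cup \mathcal{V},\mathcal{U})$, which follows because any $U \in \mathcal{U}$ meeting $F$ automatically meets $\cup \mathcal{V} \supset F$.
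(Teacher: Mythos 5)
Your proposal is correct and follows exactly the paper's intended route: Corollary \ref{1.7} is obtained from Theorem \ref{1.6} together with the implication {\sf set}-$\mathsf{SSC}$ $\Rightarrow$ {\sf set}-$\mathsf{SC}$, which the paper declares ``clear'' and you verify via the monotonicity ${\rm St}(F,\mathcal{U}) \subset {\rm St}(\cup \mathcal{V},\mathcal{U})$ for $F \subset \cup \mathcal{V}$. Your explicit justification of that implication is accurate and fills in the only detail the paper leaves unstated.
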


\section{Examples}\label{sec2}

In this section, we give some examples showing the relationships
between set star Hurewicz, set strongly star Hurewicz and other
related spaces.  Some of these examples can be found in the
literature, and we establish their additional properties.

\begin{lemma} \label{2.22}
Every Hurewicz space is set strongly star-Hurewicz.
\end{lemma}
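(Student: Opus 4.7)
The plan is to reduce the set strongly star Hurewicz requirement on $X$ to the ordinary Hurewicz property applied to the closed subspace $\overline{A}$, and then convert the finite subfamilies produced by the Hurewicz selection into finite sets of witness points.

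First I would fix a nonempty $A \subset X$ and a sequence $(\mathcal{U}_n : n \in \mathbb{N})$ of collections of sets open in $X$ with $\overline{A} \subset \cup \mathcal{U}_n$ for every $n$. Since $X$ is Hurewicz and $\overline{A}$ is closed in $X$, the subspace $\overline{A}$ inherits the Hurewicz property. Viewing each $\mathcal{U}_n$ as an open cover of $\overline{A}$ (by taking traces $\{U \cap \overline{A} : U \in \mathcal{U}_n\}$, or equivalently working with covers by sets open in the ambient space), I apply the Hurewicz property in $\overline{A}$ to obtain, for each $n$, a finite subfamily $\mathcal{V}_n \subset \mathcal{U}_n$ such that every $x \in \overline{A}$ belongs to $\cup \mathcal{V}_n$ for all but finitely many $n$.

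Next I convert $(\mathcal{V}_n)$ to finite subsets of $\overline{A}$: for each $n$ and each $V \in \mathcal{V}_n$ with $V \cap \overline{A} \neq \emptyset$, choose a point $x_V \in V \cap \overline{A}$, and set $F_n := \{x_V : V \in \mathcal{V}_n,\ V \cap \overline{A} \neq \emptyset\}$. Then $F_n$ is a finite subset of $\overline{A}$. For any $x \in A \subset \overline{A}$, by the Hurewicz selection there is $n_0$ such that for all $n \geq n_0$ there exists $V \in \mathcal{V}_n$ with $x \in V$; in particular $V \cap \overline{A} \neq \emptyset$, so $x_V$ is defined and lies in $V \cap F_n$. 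Hence $V$ is included in the star $\mathrm{St}(F_n, \mathcal{U}_n)$, giving $x \in \mathrm{St}(F_n, \mathcal{U}_n)$ for all $n \geq n_0$. This is exactly the set strongly star Hurewicz condition.

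There is no serious obstacle: the only delicate point is confirming that the Hurewicz property may be invoked on $\overline{A}$ with covers consisting of sets open in $X$ (equivalent to the subspace formulation since $\overline{A}$ is closed), and remembering to pick the witness points $x_V$ from $V \cap \overline{A}$ rather than from $V$ alone, so that $F_n \subset \overline{A}$ as required by the definition.
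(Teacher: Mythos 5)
Your proof is correct and follows essentially the same route as the paper's: restrict to the closed (hence Hurewicz) subspace $\overline{A}$, apply the Hurewicz selection to get finite subfamilies $\mathcal{V}_n$, and replace each selected open set by a witness point in $V \cap \overline{A}$ to form the finite sets $F_n$. Your extra care in choosing $x_V$ only when $V \cap \overline{A} \neq \emptyset$ is a minor refinement the paper glosses over, but the argument is the same.
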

\begin{proof}
Let $ X $ be a Hurewicz space. Let $ A $ be any nonempty subset of $
X $ and $ (\mathcal{U}_n: n \in \mathbb{N}) $ be a sequence of
collections of sets open in $ X $ such that $ \overline{A} \subset
\cup \mathcal{U}_n $. Since closed subset of Hurewicz space is
Hurewicz, thus $ \overline{A} $ is Hurewicz. Therefore there exists
a sequence $(\mathcal{V}_n: n \in \mathbb{N})$ such that for each $n
\in \mathbb{N}$, $ \mathcal{V}_n $ is a finite subset of
$\mathcal{U}_n$ and for each $x \in \overline{A}$, $x \in  \cup
\mathcal{V}_n$ for all but finitely many $n$. Choose $ x_V \in
\overline{A} \cap V $ for each $ V \in \mathcal{V}_n $. For each $ n
\in \mathbb{N} $, let $ F_n= \{x_V: V \in \mathcal{V}_n \} $. Hence
each $ F_n $ is a finite subset of $ \overline{A} $ and for each $x
\in A$, $x \in  {\rm St}(F_n, \mathcal{U}_n)$ for all but finitely
many $n$.  \end{proof}

From the definitions and the above lemma we have the following
diagram.

\[
\hskip2cm {\sf set\!-\!SSC} \rightarrow {\sf set\!-\!SC}
\]
\[
\hskip1.8cm \downarrow  \hskip2cm \downarrow
\]
\[
{\sf Hurewicz} \rightarrow {\sf set\!-\!SSH} \hskip0.2cm\rightarrow
\hskip0.2cm {\sf set\!-\!SH}
\]
\[
\hskip1.8cm \downarrow  \hskip2cm \downarrow
\]
\[
\hskip1.9cm {\sf SSH} \hskip0.5cm \rightarrow \hskip0.5cm {\sf SH}
\]
\[
{\sf Diagram \ 1}
\]

However, the converse of the implications may not be true as we show
by examples.

The following example shows that the implication {\sf Hurewicz}
$\Rightarrow$ {\sf set}-$\mathsf{SSH}$ in Diagram 1 is not
reversible.

\begin{example} \rm \label{2.1} Every countably compact non-Lindel\"{o}f space
is such an example. Such (Tychonoff) spaces are, for example, the
ordinal space $[0,\omega_1)$, the long line \cite[Example
45]{steen-seebach}, the Novak space \cite[Example
112]{steen-seebach}.
\end{example}

The following example shows that the implications {\sf
set}-$\mathsf{SSC}$  $\Rightarrow$ {\sf set}-$\mathsf{SSH}$ and {\sf
set}-$\mathsf{SC}$ $\Rightarrow$ {\sf set}-$\mathsf{SH}$ in Diagram
1 are not reversible.

\begin{example} \rm \label{2.2}
There exists a Tychonoff set strongly star Hurewicz (hence, set star
Hurewicz) space which is not set starcompact (hence, not set
strongly starcompact).
\end{example}

Indeed, let $X = D(\omega)$ be the countable discrete space. Since
$X$ is $\sigma$-compact, it is Hurewicz and thus set strongly star
Hurewicz. But $X$ is not set starcompact.

The following example shows that the implication {\sf
set}-$\mathsf{SH}$ $\Rightarrow$ $\mathsf{SH}$ in Diagram 1 is not
reversible. (Mention that the following space $X$ was considered in
several papers to obtain various counterexamples.)

\begin{example} \rm \label{2.3}
There exists a Hausdorff star Hurewicz space which is not set star
Hurewicz.
\end{example}
\begin{proof} Let $X = Y \cup A \cup\{p\}$, where
\begin{center}
$A = [0, \mathfrak{c})$, \ $B = [0,\omega)$, $Y = A \times B$,
$p\notin Y$.
\end{center}
Topologize $X$ as follows:

(i) every point of $Y$ is isolated;

(ii)  a basic neighborhood of $\alpha \in A$ is of the form
\begin{center}
$ U_{\alpha}(n)=\{\alpha\} \cup \{\langle \alpha, m \rangle : n < m
\}$
\end{center}

(iii) a basic neighborhood of $p$ takes the form
\begin{center}
$ U_(F)=\{p\} \cup  \bigcup \{\langle \alpha, n \rangle \ : \alpha
\in A \setminus F, \  n \in \omega \}$
\end{center}
for a countable subset $F$ of $A$.  In \cite[Example 2.7]{SYKH},
Song proved that $X$ is star Hurewicz.

Now we show that $ X $ is not set star Hurewicz. Consider $ A =[0,
\mathfrak{c} ) $, a closed discrete subset of $ X $. For each $
\alpha \in A $, let
\begin{center}
$ U_\alpha= \{ \alpha\} \cup \{\langle \alpha, n \rangle: n \in B \}
$.
\end{center}
Then $ U_\alpha $ is open  in $ X $ by the construction of the
topology of X and $ U_\alpha \cap U_{\alpha'} = \emptyset$ for $
\alpha \not= \alpha' $. For each $ n \in \mathbb{N} $, let $
\mathcal{U}_n= \{U_\alpha: \alpha \in A \} $. Then $ (\mathcal{U}_n:
n \in \mathbb{N}) $ is a sequence of  open covers of $ \overline{A}
$. It is enough to show that there exists a point $ \beta \in A $
such that $ \beta \notin St(\cup \mathcal{V}_n, \mathcal{U}_n) $ for
all $ n \in \mathbb{N} $, for any sequence $ (\mathcal{V}_n: n \in
\mathbb{N}) $ of finite subsets of $ \mathcal{U}_n $. Let $
(\mathcal{V}_n: n \in \mathbb{N}) $ be any sequence such that for
each $ n \in \mathbb{N} $, $ \mathcal{V}_n $ is a finite subset of $
\mathcal{U}_n $. For each $ n \in \mathbb{N} $, $ \mathcal{V}_n $ is
finite, hence there exists $ \alpha_n< \mathfrak{c} $ such that $
U_\alpha \cap (\cup \mathcal{V}_n)=\emptyset $ for each $ \alpha >
\alpha_n $. Let $ \alpha'= sup \{\alpha_n:n \in \mathbb{N}  \} $. If
we pick $ \beta > \alpha' $, then $ U_\beta \cap (\cup
\mathcal{V}_n) =\emptyset $ for each $ n \in \mathbb{N} $. Since $
U_\beta $ is the only element of $ \mathcal{U}_n $ containing the
point $ \beta $ for each $ n \in \mathbb{N} $. Thus $ \beta \notin
St(\mathcal{V}_n, \mathcal{U}_n) $ for all $ n \in \mathbb{N} $,
which shows that $ X $ is not set star Hurewicz.
\end{proof}

The following example shows that the implication {\sf
set}-$\mathsf{SSH}$  $\Rightarrow$ {\sf set}-$\mathsf{SH}$ in
Diagram 1 is not reversible.

\begin{example}\rm
There exists a Tychonoff set star Hurewicz space which is not set
strongly star Hurewicz.
\end{example}
\begin{proof}
Let $ X= A \cup B $, where $ A= \{a_\alpha: \alpha< \mathfrak{c} \}
$ is any set with $ |A|=\mathfrak{c} $ and $ B $ is any set with $
|B|=\omega $ such that any element of $ B $ is not in $ A $.
Topologize $ X $ as follows: for each $ a_\alpha \in A $ and each
finite subset $ F \subset B $, $ \{a_\alpha \} \cup (B \setminus F)
$ is a basic open neighborhood of $ a_\alpha $, and each element of
$ B $ is isolated.

First we show that $ X $ is set star Hurewicz space. Let $ C $ be
any nonempty subset of $ X $ and $ (\mathcal{U}_n:n\in \omega) $ be
any sequence of open covers of $ \overline{C} $. If $ C \subset A $,
then for each $ n \in \omega $ and for each $ a_\alpha \in C $,
there exists $ U_{\alpha,n} \in \mathcal{U}_n $ such that $ a_\alpha
\in U_{\alpha,n} $. Then for each $ a_\alpha \in C $, we can find a
finite set $ F_{\alpha,n} $ such that $ \{a_\alpha \} \cup (B
\setminus F_{\alpha,n}) \subseteq U_{\alpha,n} $. Then it is clear
that for each $ n \in \omega $ and for each $ \alpha \not= \alpha'
$, $ U_{\alpha,n} \cap U_{\alpha',n} \not= \emptyset $. For each $ n
\in \omega $, let $ \mathcal{V}_n= \{U_{\alpha,n} \} $. Then each $
\mathcal{V}_n $ is a finite subset of $ \mathcal{U}_n $ and hence
for each $a_\alpha \in C$, $a_\alpha \in {\rm St}(\cup
\mathcal{V}_n, \mathcal{U}_n)$ for all but finitely many $n$. Let $C
\subset B $. Since $B$ is countable, $ B $ is set star Hurewicz.
Hence $ X $ is set star Hurewicz.

Similarly to the proof of Example \ref{2.4}, we can prove that $X$
is not set strongly star Hurewicz.
\end{proof}

The following example shows that the implication {\sf
set}-$\mathsf{SSH}$  $\Rightarrow$ {\sf SSH} in Diagram 1
consistently is not reversible.

\begin{example}\rm  \label{2.4}
Assuming $ \omega_1< \mathfrak{b}=\mathfrak{c} $, there exists a
Tychonoff stongly star Hurewicz space $X$ which is not set strongly
star Hurewicz.
\end{example}
\begin{proof}
Let $ X=\psi(\mathcal{A})= \mathcal{A} \cup \omega $ be the
Isbell-Mr\'{o}wka space with $ |A|=\omega_1 $. Then $ X $ is
strongly star Hurewicz Tychonoff pseudocompact space (see Theorem
\ref{th1.8}).

Now we prove that $ X $ is not set strongly star Hurewicz. Let $ A=
\mathcal{A} = \{a_\alpha: \alpha< \omega_1 \}$. Then $ A $ is closed
subset of $ X $. For each $ \alpha< \omega_1 $, let $ U_\alpha=
\{a_\alpha \} \cup (a_\alpha) $. For each $ n \in \mathbb{N} $, let
$ \mathcal{U}_n= \{U_\alpha: \alpha< \omega_1 \} $. Then $
(\mathcal{U}_n: n \in \mathbb{N}) $ is a sequence of  open covers of
$ \overline{A} $. It is enough to show that there exists a point $
a_\beta  \in A $ such that
\begin{center}
$ a_\beta \notin St(F_n, \mathcal{U}_n) $ for all $  n \in
\mathbb{N} $,
\end{center}
for any sequence $ (F_n: n \in \mathbb{N}) $  of finite subsets of $
\overline{A} $. Let $ (F_n: n \in \mathbb{N}) $ be any sequence of
finite subsets of $ \overline{A} $. Then there exists $ \alpha'<
\omega_1 $ such that $ U_\alpha \cap (\bigcup_{n \in \mathbb{N} }
F_n) =\emptyset$, for each $ \alpha > \alpha' $. Pick $ \beta >
\alpha' $, then $ U_\beta \cap F_n =\emptyset$ for each $ n \in
\mathbb{N} $. Since $ U_\beta $ is the only element of $
\mathcal{U}_n $ containing the point $ a_\beta $ for each $ n \in
\mathbb{N} $. Thus $ a_\beta \notin St(F_n, \mathcal{U}_n) $ for all
$ n \in \mathbb{N} $, which shows that $ X $ is not set strongly
star Hurewicz.
\end{proof}

\begin{remark}\rm
(1) (1) In \cite{KKS}, Ko\v{c}inac et al. gave an example of
Tychonoff set starcompact space $X$ that is not set strongly
starcompact. This shows that the implication {\sf
set}-$\mathsf{SSC}$ $\Rightarrow$  {\sf set}-$\mathsf{SC}$ in
Diagram 1 is not reversible.

(2) It is known that there are star Hurewicz spaces which are not
strongly star Hurewicz (see \cite{SYHR}). This shows that the
implication $\mathsf{SSH}$  $\Rightarrow$ $\mathsf{SH}$ in  Diagram
1 is not reversible.
\end{remark}

\section{Results}

In some classes of spaces certain properties from Diagram 1
coincide. In \cite{BCK} the following theorem was proved.

\begin{theorem} {\rm (\cite[Proposition 4.1]{BCK})} \label{2.7}
If $X$ is a paracompact Hausdorff space, then $X$ is star Hurewicz
if and only if $X$ is Hurewicz.
\end{theorem}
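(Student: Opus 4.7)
The plan is to prove the two implications separately. The direction Hurewicz $\Rightarrow$ star Hurewicz is immediate and does not even use paracompactness: given a Hurewicz selection $\mathcal{V}_n \subset \mathcal{U}_n$ with each $x \in X$ lying in $\cup \mathcal{V}_n$ for all but finitely many $n$, the inclusion $\cup \mathcal{V}_n \subset {\rm St}(\cup \mathcal{V}_n, \mathcal{U}_n)$ gives exactly what is needed for star Hurewicz. So all the work is in the converse.

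For the converse, I would invoke the Stone--Morita theorem: a Hausdorff space is paracompact if and only if it is fully normal, i.e., every open cover has an open \emph{star refinement}. Given a sequence $(\mathcal{U}_n : n \in \mathbb{N})$ of open covers of $X$, for each $n$ I choose an open cover $\mathcal{W}_n$ of $X$ that star-refines $\mathcal{U}_n$, meaning that for every $W \in \mathcal{W}_n$ there exists $U_W \in \mathcal{U}_n$ with ${\rm St}(W, \mathcal{W}_n) \subset U_W$. Apply the star Hurewicz property of $X$ to the sequence $(\mathcal{W}_n)$ to obtain finite subfamilies $\mathcal{V}_n \subset \mathcal{W}_n$ such that every $x \in X$ lies in ${\rm St}(\cup \mathcal{V}_n, \mathcal{W}_n)$ for all but finitely many $n$. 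Finally set $\mathcal{V}_n' = \{U_V : V \in \mathcal{V}_n\} \subset \mathcal{U}_n$, which is finite.

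The core computation then is the containment
\[
{\rm St}(\cup \mathcal{V}_n, \mathcal{W}_n) \;=\; \bigcup_{V \in \mathcal{V}_n} {\rm St}(V, \mathcal{W}_n) \;\subset\; \bigcup_{V \in \mathcal{V}_n} U_V \;=\; \cup \mathcal{V}_n',
\]
which follows because any $W \in \mathcal{W}_n$ meeting $\cup \mathcal{V}_n$ must meet some $V \in \mathcal{V}_n$, and hence $W \subset {\rm St}(V, \mathcal{W}_n) \subset U_V$. Combining with the star Hurewicz conclusion for $(\mathcal{W}_n)$, each $x \in X$ lies in $\cup \mathcal{V}_n'$ for all but finitely many $n$, establishing the Hurewicz property.

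The only real obstacle is the choice of the intermediate refinement: a plain locally finite open refinement is not strong enough, because a finite subfamily $\mathcal{V}_n$ of a locally finite open cover may still have its union meet infinitely many members of the cover. Using a star refinement (supplied precisely by paracompact Hausdorffness) bypasses this difficulty, since then each single $V \in \mathcal{V}_n$ is absorbed into one member of the original cover $\mathcal{U}_n$, and finitely many such absorptions give the required finite subfamily $\mathcal{V}_n'$ of $\mathcal{U}_n$.
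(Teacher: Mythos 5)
The paper does not prove this statement itself; it simply cites \cite[Proposition 4.1]{BCK}. Your argument is correct and complete, and it is essentially the standard proof of that cited proposition: the forward direction is immediate from $\cup\mathcal{V}_n \subset {\rm St}(\cup\mathcal{V}_n,\mathcal{U}_n)$, and the converse correctly uses the Stone theorem to extract open star refinements $\mathcal{W}_n$ and then absorbs ${\rm St}(\cup\mathcal{V}_n,\mathcal{W}_n)$ into the finite family $\{U_V : V\in\mathcal{V}_n\}\subset\mathcal{U}_n$. Your closing remark about why a mere locally finite refinement would not suffice is also accurate.
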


From Theorem \ref{2.7} and Diagram 1, we have the following.

\begin{theorem} \label{2.6}
If $X$ is a paracompact Hausdorff space, then the following
statements are equivalent:
\begin{enumerate}
\item $X$ is Hurewicz;
\item $X$ is set strongly star Hurewicz;
\item $X$ is strongly star Hurewicz;
\item $X$ is set star Hurewicz;
\item $X$ is star Hurewicz.
\end{enumerate}
\end{theorem}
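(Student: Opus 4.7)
The proof is almost entirely a bookkeeping exercise: Diagram 1 gives us a chain of implications among the five properties, and Theorem \ref{2.7} supplies the single missing arrow that closes the loop. So my plan is to trap all five conditions between \textsf{Hurewicz} (the strongest) and $\mathsf{SH}$ (the weakest), and then invoke Theorem \ref{2.7} to collapse the chain.

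Concretely, I would argue (1)$\Rightarrow$(2)$\Rightarrow$(3)$\Rightarrow$(5)$\Rightarrow$(1), and also (2)$\Rightarrow$(4)$\Rightarrow$(5), as follows. The implication (1)$\Rightarrow$(2), that every Hurewicz space is \textsf{set}-$\mathsf{SSH}$, is exactly Lemma \ref{2.22} (no hypothesis of paracompactness or Hausdorffness is needed). The implications (2)$\Rightarrow$(3) (\textsf{set}-$\mathsf{SSH}$ $\Rightarrow$ $\mathsf{SSH}$), (2)$\Rightarrow$(4) (\textsf{set}-$\mathsf{SSH}$ $\Rightarrow$ \textsf{set}-$\mathsf{SH}$), (3)$\Rightarrow$(5) ($\mathsf{SSH}$ $\Rightarrow$ $\mathsf{SH}$), and (4)$\Rightarrow$(5) (\textsf{set}-$\mathsf{SH}$ $\Rightarrow$ $\mathsf{SH}$) are all recorded as arrows of Diagram 1 and follow directly from the definitions (taking $A = X$ in the set versions, and covering sets by stars of their own finite subunions in the strongly-star versus star versions).

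The only implication that uses the paracompactness and Hausdorffness hypothesis is (5)$\Rightarrow$(1): a paracompact Hausdorff $\mathsf{SH}$ space is Hurewicz. This is Theorem \ref{2.7} (Proposition 4.1 of \cite{BCK}), which I would cite verbatim. Once all six arrows are in place, the cycle (1)$\Rightarrow$(2)$\Rightarrow$(3)$\Rightarrow$(5)$\Rightarrow$(1) shows (1), (2), (3), (5) are equivalent, and the side loop through (4) forces (4) to be equivalent to them as well.

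There is really no main obstacle here; the only subtlety is making sure every arrow of Diagram 1 genuinely holds without extra assumptions (they do, straight from the definitions), so that the single nontrivial input is Theorem \ref{2.7}. I would therefore write the proof very briefly, essentially as: ``By Lemma \ref{2.22}, (1) implies (2). By Diagram 1, (2) implies both (3) and (4), and each of (3) and (4) implies (5). Finally, Theorem \ref{2.7} (using that $X$ is paracompact Hausdorff) gives (5)$\Rightarrow$(1), closing the cycle.'' \eproof
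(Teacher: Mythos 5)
Your proposal is correct and matches the paper exactly: the paper gives no separate argument for Theorem \ref{2.6}, stating only that it follows from Theorem \ref{2.7} together with the implications of Diagram 1, which is precisely the cycle you assemble. Your elaboration of the individual arrows is accurate and adds nothing beyond what the paper already relies on.
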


A space $X$ is \emph{metacompact} (resp., \emph{meta-Lindel\"{o}f})
if each open cover of $X$ has a point-finite (resp.,
point-countable) open refinement.

\begin{theorem}
Every set strongly star Hurewicz hereditarily metacompact space $X$
is a (set) Hurewicz space.
\end{theorem}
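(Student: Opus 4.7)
The plan is to establish the stronger \emph{set Hurewicz} conclusion; the ordinary Hurewicz property then follows by choosing $A=X$. Let $A$ be a nonempty subset of $X$ and $(\mathcal{U}_n: n\in \mathbb{N})$ a sequence of collections of sets open in $X$ such that $\overline{A} \subset \cup \mathcal{U}_n$ for each $n$. Since $X$ is hereditarily metacompact, the closed subspace $\overline{A}$ is metacompact, so for each $n$ the relative cover $\{U \cap \overline{A} : U \in \mathcal{U}_n\}$ admits a point-finite open refinement $\widetilde{\mathcal{W}}_n$ in $\overline{A}$.

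Next I would lift $\widetilde{\mathcal{W}}_n$ to a family of sets open in $X$. For each $\widetilde{W} \in \widetilde{\mathcal{W}}_n$ choose $U(\widetilde{W}) \in \mathcal{U}_n$ with $\widetilde{W} \subset U(\widetilde{W}) \cap \overline{A}$, and choose an open set $W$ in $X$ with $W \cap \overline{A} = \widetilde{W}$; after replacing $W$ by $W \cap U(\widetilde{W})$, we may assume $W \subset U(\widetilde{W})$. Let $\mathcal{W}_n$ denote the resulting collection. Its members are open in $X$, its union covers $\overline{A}$, and the key observation is that point-finiteness transfers faithfully to $\overline{A}$: for every $x \in \overline{A}$ and $W \in \mathcal{W}_n$ we have $x \in W$ iff $x \in \widetilde{W}$, so each $x \in \overline{A}$ lies in only finitely many members of $\mathcal{W}_n$.

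Now I would invoke the set strongly star Hurewicz hypothesis applied to $A$ and the sequence $(\mathcal{W}_n)$: there exist finite sets $F_n \subset \overline{A}$ such that every $x \in A$ satisfies $x \in {\rm St}(F_n,\mathcal{W}_n)$ for all but finitely many $n$. For each $n$ the subfamily
\[
\mathcal{W}_n^{\ast} := \{W \in \mathcal{W}_n : W \cap F_n \neq \emptyset\}
\]
is finite, being a finite union (indexed by $y \in F_n$) of the finite sets $\{W \in \mathcal{W}_n : y \in W\}$. Setting $\mathcal{V}_n := \{U(\widetilde{W}) : W \in \mathcal{W}_n^{\ast}\}$, a finite subset of $\mathcal{U}_n$, the inclusion ${\rm St}(F_n,\mathcal{W}_n) = \cup \mathcal{W}_n^{\ast} \subset \cup \mathcal{V}_n$ immediately implies that each $x \in A$ belongs to $\cup \mathcal{V}_n$ for all but finitely many $n$, as required.

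The principal technical point is the passage from the subspace refinement provided by metacompactness of $\overline{A}$ to a family of sets open in $X$ on which the set strongly star Hurewicz property can legitimately be invoked, while preserving point-finiteness \emph{at points of $\overline{A}$} (which is all that matters, since $F_n \subset \overline{A}$). Once the lift is arranged so that each lifted member sits inside its chosen $U(\widetilde{W}) \in \mathcal{U}_n$, the remainder is a routine finite counting argument.
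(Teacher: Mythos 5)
Your proof is correct and follows essentially the same strategy as the paper's: pass to a point-finite open refinement, apply the set strongly star Hurewicz property to it, observe that ${\rm St}(F_n,\cdot)$ against a point-finite family is a finite union, and pull back to a finite subfamily of $\mathcal{U}_n$. The only difference is that the paper applies hereditary metacompactness to the open subspace $\bigcup\mathcal{U}_n$ (so the refinement is automatically open in $X$ and no lifting is needed), whereas you refine on the closed subspace $\overline{A}$ and then lift to sets open in $X$ while preserving point-finiteness at points of $\overline{A}$ --- both routes work.
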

\begin{proof}
Let  $A$ be a subset of $X$ and $(\mathcal{U}_n: n \in \mathbb{N})$
be a sequence of covers of $\overline{A}$ by sets open in $X$. For
every $n \in \mathbb{N}$ the set $\bigcup \mathcal{U}_n$ is
metacompact. Let $\mathcal{V}_n$ be a point-finite open refinement
of $\mathcal{U}_n$, $n \in \mathbb{N}$. As $X$ is set strongly star
Hurewicz, there is a sequence $(F_n: n \in \mathbb{N})$ of finite
subsets of $\overline{A}$ such that for each $x \in A$, $x \in {\rm
St}(F_n, \mathcal{V}_n)$ for all but finitely many $n$. Elements of
each $F_n$ belongs to finitely many members $V_{n,1},..., V_{n,
k(n)}$ of $\mathcal{V}_n$. Let $\mathcal{W}_n= \{ V_{n,1},..., V_{n,
k(n)}\}$. Then ${\rm St}(F_n, \mathcal{V}_n)= \bigcup
\mathcal{W}_n$, so that we have for each $x \in A$, $x \in \bigcup
\mathcal{W}_n$ for all but finitely many $n$. For each $W \in
\mathcal{W}_n$ take an element $U_W$ of $ \mathcal{U}_n$ such that
$W\subset U_W$. Then, for each $n$, $\mathcal{H}_n = \{U_W: W \in
\mathcal{W}_n \} $ is a finite subset of $ \mathcal{U}_n $ and for
each $x \in A$, $x \in \bigcup \mathcal{H}_n$ for all but finitely
many $n$. It is easy to prove that this fact actually gives that $X$
is a Hurewicz space.
\end{proof}

Since every set strongly star Hurewicz space is strongly star
Hurewicz,  we have the following corollary from \cite[Theorem
2.11]{SYK} and \cite[Corollary 2.12]{SYK}.

\begin{corollary}
If $X$ is a set strongly star Hurewicz space, then the following
statements  are equivalent:
\begin{enumerate}
\item  $X$ is a meta-Lindel\"{o}f space.
\item $X$ is a para-Lindel\"{o}f space.
\item $X$ is a  Lindel\"{o}f space.
\end{enumerate}
\end{corollary}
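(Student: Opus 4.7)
The plan is to close the cycle $(3) \Rightarrow (2) \Rightarrow (1) \Rightarrow (3)$, using only elementary covering-property implications for the first two arrows and invoking the cited results from \cite{SYK} for the last one.

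First I would handle $(3) \Rightarrow (2)$ and $(2) \Rightarrow (1)$; neither of these uses the set strongly star Hurewicz hypothesis. For the former, if $X$ is Lindel\"of then any open cover has a countable open subcover, and any countable family is automatically locally countable (every neighborhood of every point meets only countably many of its members, since the whole family has at most countably many members), so $X$ is para-Lindel\"of. For the latter, every locally countable family is point-countable, so every para-Lindel\"of space is meta-Lindel\"of.

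The content lies in $(1) \Rightarrow (3)$. Here I would first observe that every set strongly star Hurewicz space is strongly star Hurewicz in the classical sense: taking $A = X$ in the definition of {\sf set}-$\mathsf{SSH}$ one has $\overline{A} = X$, so any sequence of open covers of $X$ is automatically a sequence of covers of $\overline{A}$ in the set-sense, and the conclusion reduces verbatim to the classical $\mathsf{SSH}$ property. With this reduction in hand, \cite[Theorem 2.11]{SYK} (which asserts that a strongly star Hurewicz meta-Lindel\"of space is Lindel\"of) directly yields (3) from (1). Alternatively, $(2) \Rightarrow (3)$ can be obtained in the same way from \cite[Corollary 2.12]{SYK}, giving a second independent path through the cycle; this is presumably the reason both results from \cite{SYK} are cited.

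There is essentially no serious obstacle in the proof: the only nontrivial step is the one-line observation that the set strongly star Hurewicz property is stronger than the classical strongly star Hurewicz property, after which the two cited results from \cite{SYK} supply the nontrivial directions and the rest is a direct appeal to standard implications between Lindel\"of-type properties.
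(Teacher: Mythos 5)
Your proposal is correct and follows essentially the same route as the paper, whose entire justification is the one-line observation that every set strongly star Hurewicz space is strongly star Hurewicz (obtained exactly as you say, by taking $A=X$), after which the equivalences are quoted from \cite[Theorem 2.11]{SYK} and \cite[Corollary 2.12]{SYK}. Your explicit spelling out of the elementary implications Lindel\"of $\Rightarrow$ para-Lindel\"of $\Rightarrow$ meta-Lindel\"of is a harmless elaboration of what the cited results already contain.
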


\begin{theorem} If each nonempty set $A \subset X$ is dense in $X$ and $X$ is
star Hurewicz (resp., strongly star Hurewicz) space, then $X$ is set
star Hurewicz (resp., set strongly star Hurewicz).
\end{theorem}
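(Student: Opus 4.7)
The plan is to reduce the set-theoretic versions of the star Hurewicz properties directly to their classical counterparts by exploiting the density hypothesis. The key observation, which I would make first, is that if every nonempty subset $A \subset X$ is dense in $X$, then $\overline{A} = X$ for every nonempty $A$. Consequently, any collection $\mathcal{U}_n$ of sets open in $X$ with $\overline{A} \subset \cup \mathcal{U}_n$ is in fact an open cover of the whole space $X$.

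Once this is in place, the proof is immediate. Given any nonempty $A \subset X$ and a sequence $(\mathcal{U}_n: n \in \mathbb{N})$ with $\overline{A} \subset \cup \mathcal{U}_n$ for each $n$, I would apply the star Hurewicz property of $X$ to $(\mathcal{U}_n)$ viewed as a sequence of open covers of $X$. This yields finite subfamilies $\mathcal{V}_n \subset \mathcal{U}_n$ such that for every $x \in X$ (hence in particular for every $x \in A$), $x \in {\rm St}(\cup \mathcal{V}_n, \mathcal{U}_n)$ for all but finitely many $n$, which is exactly the set star Hurewicz conclusion. The strongly star Hurewicz case is handled identically: the finite sets $F_n \subset X$ produced by the strongly star Hurewicz property of $X$ are automatically contained in $\overline{A} = X$, matching the definitional requirement that $F_n \subset \overline{A}$.

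There is no genuine obstacle in this argument; the density hypothesis collapses the distinction between ``open cover of $\overline{A}$'' and ``open cover of $X$'', so the set-theoretic strengthenings follow automatically from the ordinary versions. The only care needed is the bookkeeping verification that the chosen finite witnesses ($\mathcal{V}_n$ or $F_n$) satisfy the formal requirements of the set definitions, which they do trivially under the hypothesis $\overline{A} = X$.
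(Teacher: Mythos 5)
Your proposal is correct and follows exactly the paper's own argument: density of every nonempty $A$ forces $\overline{A}=X$, so each $\mathcal{U}_n$ is an open cover of $X$ and the (strongly) star Hurewicz selection restricts to $A$ (with $F_n\subset X=\overline{A}$ automatically). Your explicit remark that the $F_n$ land in $\overline{A}$ is a small bookkeeping point the paper omits by deferring the strongly star case to similarity, but the route is the same.
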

\begin{proof} Because the proofs of two cases are quite similar, we prove only the
set star Hurewicz case.

Let $A$ be any nonempty subset of $X$ and $(\mathcal{U}_n: n \in
\mathbb{N})$ be a sequence of open covers of $\overline{A}$. Since
$A$ is dense in $X$, $(\mathcal{U}_n: n \in \mathbb{N})$ is a
sequence of open covers of $X$. Since $X$ is star Hurewicz, there is
a sequence $ (\mathcal{V}_n: n \in \mathbb{N}) $ such that for each
$ n \in \mathbb{N} $, $ \mathcal{V}_n $ is a finite subset of $
\mathcal{U}_n $ and for each $ x \in X$, $ x \in {\rm St}(\cup
\mathcal{V}_n, \mathcal{U}_n) $ for all but finitely many $n$. Thus
for each $ x \in A$, $ x \in  {\rm St}(\cup \mathcal{V}_n,
\mathcal{U}_n) $ for all but finitely many $ n $, which shows that $
X$ is set star Hurewicz.
\end{proof}

We now explore preservation of set star Hurewicz and set strongly
star Hurewicz spaces under basic topological constructions.

Observe that set star Hurewicz  and set strongly star Hurewicz are
not hereditary properties.  The space $X$ in Example \ref{2.4},
shows that a closed subset of a Tychonoff set star Hurewicz space
$X$ need not be set star Hurewicz. Indeed, the set $\mathcal{A}$ is
a discrete closed subset of the space $X$ in Example \ref{2.4} of
uncountable cardinality $\omega_1$, so that it cannot be set star
Hurewicz.

We saw that the ordinal space $X = [0,\omega_1)$ is set strongly
star Hurewicz. However, the subspace $Y = \{\alpha + 1: \alpha
\mbox{ is a limit ordinal }\}$ of $X$ is not set strongly star
Hurewicz.

However, we  have the following result about preservation of set
star Hurewicz and set strongly star Hurewicz spaces.

\begin{theorem} \label{3.3}
A clopen subspace of a  set star Hurewicz (resp., sert steongly star
Hureqicz) space is also set star Hurewicz (resp., set strongly star
Hurewicz).
\end{theorem}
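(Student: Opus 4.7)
The plan is to show that the clopenness of $Y$ in $X$ lets us transport any instance of the set star Hurewicz problem in $Y$ to an instance in $X$, solve it there, and then recognize the resulting data as already living inside $Y$. I will write the argument for the set star Hurewicz case; the set strongly star Hurewicz case will be essentially identical with finite subsets $F_n$ replacing finite subfamilies $\mathcal V_n$.

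Let $Y$ be clopen in a set star Hurewicz space $X$. Fix a nonempty $A \subset Y$ and a sequence $(\mathcal U_n : n \in \mathbb N)$ of collections of sets open in $Y$ with $\overline{A}^{\,Y} \subset \bigcup \mathcal U_n$ for every $n$. I would then record the three translation observations on which the whole argument rests: (i) since $Y$ is closed in $X$, $\overline{A}^{\,Y} = \overline{A}^{\,X}$; (ii) since $Y$ is open in $X$, every set open in $Y$ is also open in $X$, so each $\mathcal U_n$ is a collection of sets open in $X$ covering $\overline{A}^{\,X}$; (iii) since every member of $\mathcal U_n$ is a subset of $Y$, for any $S \subset Y$ the star ${\rm St}(S, \mathcal U_n)$ computed in $Y$ equals the star computed in $X$ (both are $\bigcup\{U \in \mathcal U_n : U \cap S \neq \emptyset\}$, which is automatically a subset of $Y$).

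With these observations in place I would invoke the set star Hurewicz property of $X$ on the data $(A, (\mathcal U_n))$ to obtain, for each $n$, a finite $\mathcal V_n \subset \mathcal U_n$ such that every $x \in A$ lies in ${\rm St}(\bigcup \mathcal V_n, \mathcal U_n)$ (computed in $X$) for all but finitely many $n$. By observation (iii) the same membership holds with stars computed in $Y$, so $(\mathcal V_n)$ witnesses the set star Hurewicz property in $Y$. The set strongly star Hurewicz case proceeds word for word with a sequence $(F_n)$ of finite subsets of $\overline{A}^{\,X} = \overline{A}^{\,Y}$ in place of $(\mathcal V_n)$.

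There is no real obstacle: the entire content is the verification of the three translation observations, of which (iii) is the only one worth stating carefully, since it is what ensures that the witnesses produced by the ambient space $X$ remain witnesses when we forget $X$ and look only inside $Y$. I would not expect any hypothesis beyond clopenness to be needed; in fact, openness of $Y$ alone handles observations (ii) and (iii), while closedness of $Y$ handles observation (i).
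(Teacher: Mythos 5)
Your proposal is correct and follows essentially the same route as the paper: use closedness of $Y$ to identify $\overline{A}^{\,Y}$ with $\overline{A}^{\,X}$, use openness of $Y$ to view the covers as covers by sets open in $X$, apply the property in $X$, and observe the witnesses already live in $Y$. Your explicit observation (iii) that stars computed in $Y$ and in $X$ coincide is a useful point the paper leaves implicit, and it also shows the paper's final step of replacing $F_n$ by $F_n\cap Y$ is redundant since $F_n\subset \overline{A}^{\,X}=\overline{A}^{\,Y}\subset Y$.
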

\begin{proof} We prove only the set strongly star Hurewicz case
because the proof for set star Hurewicz case is quite similar.

Let $X$ be a set strongly star Hurewicz space and $Y \subset X$ be a
clopen subspace. Let $A$ be any subset of $Y$ and $(\mathcal{U}_n: n
\in \mathbb{N}) $ be any sequence of collections of open sets in
$(Y, \tau_Y)$ such that for each $ n \in \mathbb{N} $, $ {\rm Cl}_Y
(A) \subset \cup \mathcal{U}_n$. Since $Y$ is open, then $
(\mathcal{U}_n: n \in \mathbb{N}) $ is a sequence of collections of
open sets in $X$, and since $Y$ is closed, ${\rm Cl}_Y(A)= {\rm
Cl}_X (A) $. Applying the fact that $X$ is set strongly star
Hurewicz, we find a sequence $(F_n: n \in \mathbb{N})$ of finite
subsets of ${\rm Cl_X}(A)$ such that for each $x \in A$, $x \in {\rm
St} (F_n,\mathcal{U}_n)$ for all but finitely many $n$. Set $K_n =
F_n\cap Y$, $n \in \mathbb N$. Then the sequence $(K_n:n\in \mathbb
N)$ witnesses for $(\mathcal{U}_n: n \in \mathbb{N})$ that $Y$ is
set strongly star Hurewicz.
\end{proof}

\medskip
We now consider (non)preservation of the set star Hurewicz and set
strongly star Hurewicz properties under some sorts of mappings.

\begin{theorem} \label{3.11}
A continuous image of a set star Hurewicz space is set star
Hurewicz.
\end{theorem}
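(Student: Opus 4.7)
The plan is the standard continuous-image argument: pull the data back to $X$, apply the hypothesis there, and push the resulting finite subcollections forward. Assume $f\colon X\to Y$ is continuous and surjective with $X$ set star Hurewicz, and let $Y=f(X)$. Given a nonempty $B\subset Y$ and a sequence $(\mathcal{U}_n:n\in\mathbb N)$ of collections of open sets in $Y$ with $\overline B\subset\bigcup\mathcal{U}_n$ for each $n$, I put $A=f^{-1}(B)$ and, for each $n$, define the pulled-back collection
\[
\mathcal{U}'_n=\{f^{-1}(U):U\in\mathcal{U}_n\},
\]
which consists of open subsets of $X$ since $f$ is continuous.

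The first routine step is to verify the hypothesis of the set star Hurewicz property for $A$ in $X$. Because $f$ is continuous, $f^{-1}(\overline B)$ is closed in $X$ and contains $A$, hence $\overline A\subset f^{-1}(\overline B)\subset f^{-1}(\bigcup\mathcal{U}_n)=\bigcup\mathcal{U}'_n$. Applying the set star Hurewicz property of $X$ to $A$ and $(\mathcal{U}'_n:n\in\mathbb N)$, I obtain finite subfamilies $\mathcal{V}'_n\subset\mathcal{U}'_n$ such that every $x\in A$ lies in $\mathrm{St}(\bigcup\mathcal{V}'_n,\mathcal{U}'_n)$ for all but finitely many $n$. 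For each $n$, choose a finite $\mathcal{V}_n\subset\mathcal{U}_n$ with $\mathcal{V}'_n=\{f^{-1}(V):V\in\mathcal{V}_n\}$.

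The only point that needs a short check is that the star relation transfers under $f$. If $y\in B$, pick $x\in A$ with $f(x)=y$. For all but finitely many $n$ there is $U\in\mathcal{U}_n$ with $x\in f^{-1}(U)$ and $f^{-1}(U)\cap f^{-1}(V)\neq\emptyset$ for some $V\in\mathcal{V}_n$. Then $y=f(x)\in U$, and any point $z\in f^{-1}(U)\cap f^{-1}(V)$ yields $f(z)\in U\cap V\neq\emptyset$, so $U$ witnesses $y\in\mathrm{St}(\bigcup\mathcal{V}_n,\mathcal{U}_n)$. Thus $(\mathcal{V}_n:n\in\mathbb N)$ witnesses set star Hurewicz in $Y$.

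There is no real obstacle here; the whole argument is essentially bookkeeping with preimages. The single place one has to be slightly careful is the identity $f^{-1}(U)\cap f^{-1}(V)=f^{-1}(U\cap V)$ used to propagate nonempty intersections forward, and the use of continuity (not openness or anything stronger) to ensure $\overline A\subset f^{-1}(\overline B)$ so that the pulled-back family indeed covers $\overline A$.
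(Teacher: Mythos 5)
Your proposal is correct and follows essentially the same route as the paper's proof: pull the covers back along $f$, note $\overline{A}\subset f^{-1}(\overline B)\subset\bigcup\mathcal{U}'_n$ by continuity, apply the set star Hurewicz property of $X$ to $A=f^{-1}(B)$, and push the finite subfamilies forward. Your explicit verification that nonempty intersections of preimages yield nonempty intersections of the original sets is just a more detailed writing-out of the inclusion $f(\mathrm{St}(\cup\mathcal{V}'_n,\mathcal{U}'_n))\subset\mathrm{St}(\cup\mathcal{V}_n,\mathcal{U}_n)$ that the paper asserts directly.
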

\begin{proof}
Let $X$ be a set star Hurewicz space and $f:X \rightarrow Y$ be a
continuous mapping from $X$ onto $Y$. Let $B$ be any nonempty subset
of $Y$ and   $(\mathcal{V}_n: n \in \mathbb{N})$ be a sequence of
open covers of $\overline{B}$. Let $A =f^{\gets}(B)$. Since $f$ is
continuous, for each $n \in \mathbb{N}$, $\mathcal{U}_n :=
\{f^{\gets}(V): V \in \mathcal{V}_n\}$ is the collection of open
sets in $ X $ with
\begin{center}
$ \overline{A}= \overline{f^{\gets}(B)}\subset
f^{\gets}(\overline{B}) \subset f^{\gets}(\cup \mathcal{V}_n) = \cup
\mathcal{U}_n$.
\end{center}
As $X$ is set star Hurewicz, there exists a sequence
$(\mathcal{U}_n': n \in \mathbb{N})$ such that for each $n \in
\mathbb{N}$, $\mathcal{U}_n'$ is a finite subset of $\mathcal{U}_n $
and for each $x \in A$, $x \in {\rm St}(\cup \mathcal{U}_n',
\mathcal{U}_n)$ for all but finitely many $n$. Let $\mathcal{V}_n'=
\{V:f^{\gets}(V) \in \mathcal{U}_n'\}$. Then for each $n \in
\mathbb{N}$, $\mathcal{V}_n'$ is a finite subset of $\mathcal{V}_n$.
Let $y \in B$. Then there exists $x \in A$ such that $f(x)=y$. Thus
\begin{center}
$y = f(x) \in f({\rm St}(\cup \mathcal{U}_n', \mathcal{U}_n))
\subset {\rm St}(\cup f(\{f^{\gets}(V): V \in \mathcal{V}_n'\}),
\mathcal{V}_n)= {\rm St}(\cup \mathcal{V}_n', \mathcal{V}_n)$
\end{center}
for all but finitely many $n$. Thus $Y$ is a set star Hurewicz
space.
\end{proof}

We can prove the following theorem similarly to the proof of Theorem
\ref{3.11}.

\begin{theorem}
A continuous image of a set strongly star Hurewicz space is set
strongly star Hurewicz.
\end{theorem}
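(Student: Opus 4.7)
The plan is to mirror the argument of Theorem \ref{3.11}, but transfer finite subsets of $\overline{A}$ through the continuous map instead of finite subfamilies of $\mathcal{U}_n$. First I would fix a continuous surjection $f: X \to Y$ with $X$ set strongly star Hurewicz, take an arbitrary nonempty $B \subset Y$ and a sequence $(\mathcal{V}_n : n \in \mathbb{N})$ of collections of open sets in $Y$ with $\overline{B} \subset \bigcup \mathcal{V}_n$. Setting $A = f^{\gets}(B)$, the families $\mathcal{U}_n = \{f^{\gets}(V) : V \in \mathcal{V}_n\}$ consist of open sets in $X$, and by continuity
\[
\overline{A} = \overline{f^{\gets}(B)} \subset f^{\gets}(\overline{B}) \subset f^{\gets}(\cup \mathcal{V}_n) = \cup \mathcal{U}_n.
\]

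Next, applying the set strongly star Hurewicz property in $X$ to $A$ and $(\mathcal{U}_n : n \in \mathbb{N})$, I obtain a sequence $(F_n : n \in \mathbb{N})$ of finite subsets of $\overline{A}$ such that every $x \in A$ lies in ${\rm St}(F_n, \mathcal{U}_n)$ for all but finitely many $n$. I would then push these witnesses forward by setting $G_n = f(F_n)$. Each $G_n$ is finite, and by continuity $G_n \subset f(\overline{A}) \subset \overline{f(A)} = \overline{B}$, so the sequence $(G_n)$ is a legitimate candidate witness for $B$ and $(\mathcal{V}_n)$.

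The substantive step is the star containment. Given $y \in B$, pick $x \in A$ with $f(x) = y$. For all but finitely many $n$, there is some $U \in \mathcal{U}_n$ containing $x$ and meeting $F_n$; writing $U = f^{\gets}(V)$ for the corresponding $V \in \mathcal{V}_n$, we see that $y = f(x) \in V$, and any $p \in F_n \cap f^{\gets}(V)$ yields $f(p) \in G_n \cap V$, so $V$ meets $G_n$. Hence $y \in {\rm St}(G_n, \mathcal{V}_n)$ for all but finitely many $n$, showing that $Y$ is set strongly star Hurewicz.

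The main obstacle is mild and purely formal: one must keep track of the fact that $G_n \subset \overline{B}$ (not merely $G_n \subset Y$), and that the image under $f$ of a star with respect to $\mathcal{U}_n$ is contained in the corresponding star with respect to $\mathcal{V}_n$. Both points use only continuity of $f$ and the bookkeeping $U = f^{\gets}(V) \leftrightarrow V$, so no new idea beyond that in Theorem \ref{3.11} is needed.
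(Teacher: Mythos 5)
Your argument is correct and is exactly the proof the paper intends: the paper omits the details, saying only that the theorem is proved "similarly to the proof of Theorem \ref{3.11}", and your proposal carries out precisely that adaptation (pulling back the covers, pushing forward the finite sets $F_n\subset\overline{A}$ to $G_n=f(F_n)\subset\overline{B}$, and transferring the star condition via $U=f^{\gets}(V)$). Nothing further is needed.
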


Now we give a result on preimages of set strongly star Hurewicz
spaces. For this we need a new concept defined as follows. Call a
space $X$ \emph{nearly set strongly star Hurewicz} if for each $A
\subset X$ and each sequence $(\mathcal{U}_n: n \in \mathbb{N})$ of
open covers of $X$ there is a sequence $(F_n: n \in \mathbb{N})$ of
finite subsets of $X$ such that for each $x \in A$, $x \in {\rm
St}(F_n, \mathcal{U}_n)$ for all but finitely many $n$.

\begin{theorem} \label{3.14}
Let $f: X \rightarrow Y$ be an  open and closed, finite-to-one
continuous mapping from a space $X$ onto a set strongly star
Hurewicz space $Y$. Then $X$ is nearly set strongly star Hurewicz.
\end{theorem}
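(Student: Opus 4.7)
My plan is to transfer the set strongly star Hurewicz data from $Y$ back to $X$ via a closed-map-lemma style construction of neighborhoods in $Y$ that control their preimages in $X$. Fix a nonempty $A \subset X$ and a sequence $(\mathcal{U}_n: n \in \mathbb{N})$ of open covers of $X$, and put $B = f(A)$. For each $n$ and each $y \in Y$, the finite-to-one hypothesis lets me pick a finite subfamily $\mathcal{U}_{n,y} \subset \mathcal{U}_n$ covering $f^{-1}(y)$; closedness of $f$ then makes $V_{n,y} := Y \setminus f(X \setminus \bigcup \mathcal{U}_{n,y})$ an open neighborhood of $y$ in $Y$ with the crucial saturation property $f^{-1}(V_{n,y}) \subset \bigcup \mathcal{U}_{n,y}$. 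Hence $\mathcal{V}_n := \{V_{n,y}: y \in Y\}$ is an open cover of $Y$, and in particular of $\overline{B}$.

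Next I would apply the set strongly star Hurewicz property of $Y$ to the subset $B$ and the sequence $(\mathcal{V}_n)$, obtaining a sequence $(H_n)$ of finite subsets of $\overline{B}$ such that for every $y \in B$, $y \in {\rm St}(H_n, \mathcal{V}_n)$ for all but finitely many $n$. Because $f$ is finite-to-one, $F_n := f^{-1}(H_n)$ is a finite subset of $X$, and these $(F_n)$ are the proposed witnesses of the nearly set strongly star Hurewicz property.

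For the verification, fix $x \in A$ and put $y = f(x) \in B$. For all but finitely many $n$ there are $y^* \in Y$ and $h \in H_n$ with $y, h \in V_{n,y^*}$; by the saturation property both $x \in f^{-1}(y) \subset \bigcup \mathcal{U}_{n,y^*}$ (so $x$ lies in some $U \in \mathcal{U}_{n,y^*}$) and $f^{-1}(h) \subset \bigcup \mathcal{U}_{n,y^*}$ (so $F_n \supset f^{-1}(h)$ meets the union $\bigcup \mathcal{U}_{n,y^*}$). The main obstacle is to strengthen the latter to the statement that $F_n$ meets the specific $U$ containing $x$, not just the ambient union. I would bridge this by using the openness of $f$ to refine $\mathcal{V}_n$ to $\{V_{n,y} \cap f(U'): y \in Y,\, U' \in \mathcal{U}_{n,y}\}$ (each member open in $Y$ and still containing $y$), so that the refined set strongly star Hurewicz data pairs $y^*$ with a distinguished $U^* \in \mathcal{U}_{n,y^*}$ that contains a preimage of some $h \in H_n$; preselecting $\mathcal{U}_{n,y} = \{U_n^{x'}: x' \in f^{-1}(y)\}$ for chosen $U_n^{x'} \in \mathcal{U}_n$ through each $x'$ then lets me identify $U^*$ with an element of $\mathcal{U}_n$ known to contain $x$, completing the argument.
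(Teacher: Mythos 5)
Your setup matches the paper's: the finite fibres give finite subfamilies $\mathcal{U}_{n,y}\subset\mathcal{U}_n$ covering $f^{-1}(y)$, closedness of $f$ gives the saturated neighborhoods $V_{n,y}$ with $f^{-1}(V_{n,y})\subset\bigcup\mathcal{U}_{n,y}$, and the transfer of the finite sets $H_n\subset\overline{B}$ back to $F_n=f^{-1}(H_n)$ is exactly right. You also correctly isolate the obstacle: knowing that $F_n$ meets $\bigcup\mathcal{U}_{n,y^*}$ is not enough; you need it to meet the particular $U\in\mathcal{U}_{n,y^*}$ that contains $x$.

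However, the bridging step you propose does not close that gap. If you refine $\mathcal{V}_n$ to $\{V_{n,y}\cap f(U'): U'\in\mathcal{U}_{n,y}\}$ and the star condition hands you a member $V_{n,y^*}\cap f(U^*)$ containing both $y=f(x)$ and some $h\in H_n$, then indeed $U^*$ meets $f^{-1}(h)\subset F_n$; but the only relation between $x$ and $U^*$ is $y\in f(U^*)$, i.e.\ $U^*$ contains \emph{some} point of $f^{-1}(y)$. Since $f$ is merely finite-to-one, that point need not be $x$, and your preselection $\mathcal{U}_{n,y}=\{U_n^{x'}: x'\in f^{-1}(y)\}$ does not help, because $U^*$ is of the form $U_n^{x''}$ for some $x''$ in the fibre over $y^*$, not over $y$. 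So the claim that $U^*$ is ``known to contain $x$'' is unjustified. The correct use of openness (and what the paper does) is to shrink each $V_{n,y}$ into the \emph{intersection} $\bigcap\{f(U): U\in\mathcal{U}_{n,y}\}$, which is open and contains $y$ because $\mathcal{U}_{n,y}$ is finite and each of its members meets $f^{-1}(y)$. With this single unrefined cover, the saturation property puts $x$ into \emph{some} $U\in\mathcal{U}_{n,y^*}$, and then $V_{n,y^*}\subset f(U)$ for \emph{every} such $U$, so the point $h\in H_n\cap V_{n,y^*}$ lies in $f(U)$ and hence $U\cap f^{-1}(F_n)\neq\emptyset$ for the $U$ that actually contains $x$. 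Your argument becomes correct once you replace the per-member refinement by this finite intersection.
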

\begin{proof}
Let $A \subset X$ be any nonempty set and $(\mathcal{U}_n: n \in
\mathbb{N})$ be a sequence of open covers of $X$. Then $B = f(A)$ is
a nonempty subset of $Y$. Let $y \in \overline{B}$. Then
$f^{\gets}(y)$ is finite subset of $X$, and thus for each $n \in
\mathbb{N}$, there is a finite subset $\mathcal{U}_{n_y}$ of
$\mathcal{U}_n$ such that $f^{\gets}(y) \subset \bigcup
\mathcal{U}_{n_y}$ and $U \cap f^{\gets}(y)\neq \emptyset$ for each
$U \in \mathcal{U}_{n_y}$. Since $f$ is closed, there exists an open
neighborhood $V_{n_y}$ of $y$ in $Y$ such that $f^{\gets}(V_{n_y})
\subset \cup \{U:U \in \mathcal{U}_{n_y}\}$. Since $f$ is open, we
can assume that
\begin{center}
$V_{n_y} \subset \cap \{f(U):U \in \mathcal{U}_{n_y}\}$.
\end{center}
For each $n \in \mathbb{N}$, $ \mathcal{V}_n=\{V_{n_y}: y \in
\overline{B}\} $ is an open cover of $\overline{B}$. Since $Y$ is
set strongly star Hurewicz,  there exist a sequence $(F_n: n \in
\mathbb{N})$ of finite subsets of $\overline{B}$ such that for each
$y \in B$,
\begin{center}
$ y \in {\rm St}(F_n,\mathcal{V}_n)$ \, for all but finitely many
$n$.
\end{center}
Since $f$ is finite-to-one, the sequence $(f^{\gets}(F_n): n \in
\mathbb{N}) $ is the sequence of finite subsets of $X$. Now we have
to show that for each $x \in A$,
\begin{center}
$x \in  {\rm St}( f^{\gets}(F_n), \mathcal{U}_n) $ \, for all but
finitely many $n$.
\end{center}
Let $x \in A$. Then there exist $n_0 \in \mathbb{N}$ and $y \in B$
such that $y = f(x) \in V_{n,y}$ and $V_{n,y} \cap F_n \neq
\emptyset$ for all $n \geq n_0$. Since
\begin{center}
$x \in  f^{\gets}(V_{n,y}) \subset \bigcup \{U:U \in
\mathcal{U}_{n_y}\},$
\end{center}
we can choose $U \in \mathcal{U}_{n_y}$ with $x \in U$. Then
$V_{n_y} \subset f(U)$. Thus $U \cap f^{\gets}(F_n) \neq \emptyset$
for all $n \geq n_0$. Hence
\begin{center}
$x \in {\rm St}(f^{\gets} (F_n),\mathcal{U}_n) $ for all $n\geq n_0
$.
\end{center}
Thus $X$ is nearly set strongly star Hurewicz.  \end{proof}

Following the above definition of nearly set strongly star Hurewicz
spaces we will call a space X \emph{nearly set star Hurewicz} if for
each nonempty $A \subset X $ and each sequence $(\mathcal{U}_n: n
\in \mathbb{N})$ of open covers of $X$ there is a sequence
$(\mathcal{V}_n: n \in \mathbb{N})$ such that for each $n \in
\mathbb{N}$, $\mathcal{V}_n$ is a finite subset of $\mathcal{U}_n$
and for each $x \in A$, $x \in {\rm St}(\mathcal{V}_n,
\mathcal{U}_n)$ for all but finitely many $n$.

Similarly to the proof of Theorem \ref{3.14}, with necessary small
modifications, we can prove the following.

\begin{theorem} \label{3.15}
If $f: X \rightarrow Y$ is an  open perfect mapping and $Y$ is a set
star Hurewicz space, then $X$ is nearly set star Hurewicz.
\end{theorem}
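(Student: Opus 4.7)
The plan is to adapt the argument of Theorem \ref{3.14}, with the compactness of the fibers $f^{\gets}(y)$ (given by $f$ being perfect) playing the role that finiteness of fibers played there. Observe first why the conclusion must weaken from \emph{nearly set strongly star Hurewicz} to \emph{nearly set star Hurewicz}: preimages of finite subsets of $Y$ under a perfect map are only compact, not finite, so we cannot hand back a sequence of finite subsets of $X$; we must work throughout with finite subcollections of $\mathcal{U}_n$.

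Given a nonempty $A \subset X$ and a sequence of open covers $(\mathcal{U}_n: n \in \mathbb{N})$ of $X$, I would first set $B = f(A)$ and, mirroring the construction in Theorem \ref{3.14}, for each $y \in \overline{B}$ and each $n$ use compactness of $f^{\gets}(y)$ to pick a finite $\mathcal{U}_{n,y} \subset \mathcal{U}_n$ that covers $f^{\gets}(y)$ with each member of $\mathcal{U}_{n,y}$ meeting $f^{\gets}(y)$. Closedness of $f$ supplies an open neighborhood $V_{n,y}$ of $y$ in $Y$ with $f^{\gets}(V_{n,y}) \subset \bigcup \mathcal{U}_{n,y}$, and openness of $f$ lets me shrink $V_{n,y}$ so that $V_{n,y} \subset \bigcap \{f(U): U \in \mathcal{U}_{n,y}\}$. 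The collection $\mathcal{V}_n = \{V_{n,y}: y \in \overline{B}\}$ is then an open cover of $\overline{B}$, and applying the set star Hurewicz property of $Y$ gives finite $\mathcal{W}_n \subset \mathcal{V}_n$ with $y \in {\rm St}(\bigcup \mathcal{W}_n, \mathcal{V}_n)$ for all but finitely many $n$ and every $y \in B$. The natural candidate in $X$ is then $\mathcal{U}'_n = \bigcup\{\mathcal{U}_{n,y}: V_{n,y} \in \mathcal{W}_n\}$, a finite subset of $\mathcal{U}_n$.

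The last step is to verify, for each $x \in A$ and all but finitely many $n$, that $x \in {\rm St}(\bigcup \mathcal{U}'_n, \mathcal{U}_n)$. The main obstacle here is that, because we have only \emph{star} Hurewicz (not strongly star) on $Y$, the image $y = f(x)$ typically sits in some $V_{n,y^*} \in \mathcal{V}_n$ that is not itself in $\mathcal{W}_n$ but merely intersects some $V_{n,y''} \in \mathcal{W}_n$. I would bridge this gap by a lifting argument: $x \in f^{\gets}(V_{n,y^*}) \subset \bigcup \mathcal{U}_{n,y^*}$ gives some $U \in \mathcal{U}_{n,y^*}$ with $x \in U$, and since $V_{n,y^*} \subset f(U)$, any $z \in V_{n,y^*} \cap V_{n,y''}$ has a preimage $x' \in U$; then $x' \in f^{\gets}(V_{n,y''}) \subset \bigcup \mathcal{U}_{n,y''} \subset \bigcup \mathcal{U}'_n$, so $U$ meets $\bigcup \mathcal{U}'_n$. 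This is the point where openness of $f$ is essential: without the inclusion $V_{n,y^*} \subset f(U)$ the preimage $x'$ could not be placed inside the chosen $U$, and the star argument transferring Hurewicz data from $Y$ to $X$ would collapse.
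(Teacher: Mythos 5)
Your proposal is correct and is exactly the adaptation the paper has in mind: the paper omits the proof of Theorem \ref{3.15}, saying only that it follows the proof of Theorem \ref{3.14} with small modifications, and your argument supplies precisely those modifications (compact fibers in place of finite ones, and the lifting step via $V_{n,y^*}\subset f(U)$ needed because $Y$ is only set star, not set strongly star, Hurewicz). The verification that $U$ meets $\bigcup\mathcal{U}'_n$ through a preimage $x'\in U$ of a point of $V_{n,y^*}\cap V_{n,y''}$ is the right way to close the argument.
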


From Theorem \ref{3.15} we have the following corollary.

\begin{corollary}  \label{3.16}
If $X$ is a set star Hurewicz space and $Y$ is a compact space, then
$X \times Y$ is nearly set star Hurewicz.
\end{corollary}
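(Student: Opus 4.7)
The plan is to deduce Corollary \ref{3.16} directly from Theorem \ref{3.15} by taking the projection $\pi_X: X \times Y \to X$ as the required open perfect map. Since a corollary right after the theorem is expected to be short, I would aim for a proof of only a few lines.

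First I would verify the three hypotheses of Theorem \ref{3.15} for $\pi_X$. The projection onto $X$ is continuous and open in any product. Since $Y$ is compact, $\pi_X$ is also a closed map (this is the classical tube lemma), and for each $x \in X$ the fiber $\pi_X^{\gets}(x) = \{x\} \times Y$ is compact; hence $\pi_X$ is perfect. Openness is immediate from the product topology.

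Next, by hypothesis the codomain $X$ is a set star Hurewicz space. Therefore all the assumptions of Theorem \ref{3.15} are satisfied with $X \times Y$ playing the role of the domain and $X$ playing the role of $Y$. Applying that theorem yields that $X \times Y$ is nearly set star Hurewicz, which is exactly the conclusion.

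I do not expect any real obstacle here. The only subtle point is notational: Theorem \ref{3.15} uses $X$ for the domain and $Y$ for the codomain, while the corollary reuses $X$ and $Y$ for the two factors of the product. A careful statement of the role-assignment (the corollary's $X \times Y$ is the theorem's $X$, and the corollary's $X$ is the theorem's $Y$) avoids any confusion, and no further computation is needed.
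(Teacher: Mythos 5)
Your proposal is correct and is exactly the paper's (implicit) argument: the paper states the corollary as an immediate consequence of Theorem \ref{3.15}, and the intended map is precisely the projection $\pi_X: X\times Y\to X$, which is open, continuous, closed by the Kuratowski tube-lemma argument since $Y$ is compact, and has compact fibers $\{x\}\times Y$, hence is an open perfect map onto the set star Hurewicz space $X$. Your handling of the role reversal of the letters $X$ and $Y$ between the theorem and the corollary is the only point needing care, and you address it correctly.
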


\begin{remark} \rm The product of two set star Hurewicz spaces need not be set
star Hurewicz. In fact, there exist two countably compact spaces $X$
and $Y$ such that $X \times Y$ is not set star Hurewicz (even not
set star Menger; see \cite{KKS}). Moreover, there exist a countably
compact (hence, set star Hurewicz) space $X$ and a Lindel\"{o}f
space $Y$ such that $X \times Y$ is not set star Hurewicz (see
\cite{KKS}).
\end{remark}

The following theorem is a version of Corollary \ref{3.16}. Call the
product $X\times Y$ \emph{rectangular set star Hurewicz} if for each
set $A\times B \subset X\times Y$ and each sequence $(\mathcal
U_n:n\in \mathbb N)$ of covers of $\overline{A\times B}$ by sets
open in $X\times Y$ there are finite sets $\mathcal V_n\subset
\mathcal U_n$, $n\in\mathbb N$, such that for each $z \in A \times
B$, $z \in {\rm St}(\cup\mathcal V_n,\mathcal U_n)$ for all but
finitely many $n$.

\begin{theorem} \label{3.17}
If $X$ is a set star Hurewicz space and $Y$ is a compact space, then
$X \times Y$ is rectangular set star Hurewicz.
\end{theorem}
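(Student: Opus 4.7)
The plan is to apply a tube-lemma construction and then invoke the set star Hurewicz property of $X$, in the spirit of Theorem \ref{3.14} and Corollary \ref{3.16}. Since $Y$ is compact, $\overline{B}$ is a compact subset of $Y$, and $\overline{A \times B} = \overline{A} \times \overline{B}$. Fix a nonempty $A \times B \subset X \times Y$ and a sequence $(\mathcal{U}_n : n \in \mathbb{N})$ of covers of $\overline{A} \times \overline{B}$ by open subsets of $X \times Y$.

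For each $n \in \mathbb{N}$ and each $x \in \overline{A}$, I would use compactness of the fiber $\{x\} \times \overline{B}$: for every $y \in \overline{B}$, pick $U_{x,y,n} \in \mathcal{U}_n$ containing $(x,y)$ and a basic open rectangle $W_{x,y,n} \times V_{x,y,n} \subset U_{x,y,n}$ with $(x,y) \in W_{x,y,n} \times V_{x,y,n}$. By compactness of $\overline{B}$, choose $y_1, \ldots, y_{k(n,x)} \in \overline{B}$ so that $V_{x,y_1,n}, \ldots, V_{x,y_{k(n,x)},n}$ cover $\overline{B}$; set $W_{x,n} = \bigcap_i W_{x,y_i,n}$ and $\mathcal{U}_{x,n} = \{U_{x,y_i,n} : 1 \le i \le k(n,x)\}$, so that $W_{x,n}$ is open in $X$ and $W_{x,n} \times \overline{B} \subset \bigcup \mathcal{U}_{x,n}$. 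The family $\mathcal{W}_n = \{W_{x,n} : x \in \overline{A}\}$ is then a cover of $\overline{A}$ by sets open in $X$. Applying the set star Hurewicz property of $X$ to $A$ and the sequence $(\mathcal{W}_n)$ yields finite $\mathcal{W}_n' \subset \mathcal{W}_n$ such that each $a \in A$ lies in ${\rm St}(\cup \mathcal{W}_n', \mathcal{W}_n)$ for all but finitely many $n$. I would then define $\mathcal{V}_n$ to be the union of those $\mathcal{U}_{x,n}$ with $W_{x,n} \in \mathcal{W}_n'$, a finite subfamily of $\mathcal{U}_n$.

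The main technical obstacle is verifying that the $\mathcal{V}_n$ witness the rectangular set star Hurewicz property. Given $(a,b) \in A \times B$ and $n$ large enough, choose $W_{x,n} \in \mathcal{W}_n$ with $a \in W_{x,n}$ together with $W_{x_0,n} \in \mathcal{W}_n'$ and a point $x^* \in W_{x,n} \cap W_{x_0,n}$. Since $(a,b) \in W_{x,n} \times \overline{B} \subset \bigcup \mathcal{U}_{x,n}$, some $U = U_{x,y_i,n}$ contains $(a,b)$. The auxiliary point $(x^*, y_i)$ then lies in $U$ (because $W_{x,n} \subset W_{x,y_i,n}$ and $y_i \in V_{x,y_i,n}$) and simultaneously in $W_{x_0,n} \times \overline{B} \subset \bigcup \mathcal{U}_{x_0,n} \subset \bigcup \mathcal{V}_n$, so $U \cap \bigcup \mathcal{V}_n \ne \emptyset$ and hence $(a,b) \in {\rm St}(\cup \mathcal{V}_n, \mathcal{U}_n)$, which is exactly the rectangular set star Hurewicz condition.
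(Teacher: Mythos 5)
Your proof is correct and follows essentially the same tube-lemma argument as the paper: cover each compact fiber, form the tube cover $\mathcal{W}_n$ of the closure of the first factor, apply the set star Hurewicz property of $X$ to it, and transfer the star back to the product. Your write-up is in fact slightly more careful than the paper's, both in working with basic open rectangles inside (possibly non-rectangular) members of $\mathcal{U}_n$ and in explicitly exhibiting the auxiliary point $(x^*,y_i)$ that witnesses the final inclusion $(a,b)\in {\rm St}(\cup\mathcal{V}_n,\mathcal{U}_n)$, a step the paper leaves implicit.
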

\begin{proof}
Let $A= B \times C$ be any nonempty rectangular subset of $X \times
Y$ and $(\mathcal{U}_n: n \in \mathbb{N})$ be a sequence of
collections of open sets in $X \times Y$ such that $\overline{A} =
\overline{B} \times \overline{C} \subset \bigcup \mathcal{U}_n$, $n
\in \mathbb{N}$. For each $x \in \overline{B}$, $\{x\} \times
\overline{C}$ is a compact subset of $X \times Y$. Therefore, for
each $n \in \mathbb{N}$, there is a finite subset $\{U_{n,1}^x\times
V_{n,1}^x,...,U_{n,m(x)}^x\times V_{n,m(x)}^x \} $ of $
\mathcal{U}_n $ such that $\{x\} \times \overline{C} \subset
\bigcup_{i=1}^{m(x)}(U_{n,i}^x \times V_{n,i}^x)$.  For each $n \in
\mathbb{N}$, define $ W_{n}^x = \bigcap_{i=1}^{m(x)} U_{n,i}^x $.
Each $W_{n}^x$ is an open subset of $X$ containing $x$ and
\begin{center}
$ \{x\} \times \overline{ C } \subset \bigcup \{W_{n}^x \times
V_{n,i}^x: 1 \leq i \leq m(x) \}  \subset \bigcup \{U_{n,i}^x \times
V_{n,i}^x: 1 \leq i \leq m(x) \} $.
\end{center}
Then for each $n \in \mathbb{N}$, $\mathcal{W}_n= \{W_{n}^x:x \in
\overline{B}\} $ is an open cover of $\overline{B}$. Since $X$ is
set star Hurewicz, for each $n \in \mathbb{N}$, there is finite set
$ \mathcal{W}_n'= \{W_{x_j}: 1 \leq j \leq r_n\}$ of $\mathcal{W}_n$
such that for each $b \in B$,  $b \in {\rm St}(\cup\mathcal{W}_n',
\mathcal{W}_n) $ for all but finitely many $n$. For each $n \in
\mathbb{N}$, let
\begin{center}
$ \mathcal{U}_n'=\{U_{n,i}^x \times V_{n,i}^x: 1 \leq i \leq n(x_j),
1 \leq j \leq r_n\}$.
\end{center}
Then $ \mathcal{U}_n'$ is a finite subset of $\mathcal{U}_n$. Hence
for each $a \in A$,  $a \in ({\rm St}(\cup\mathcal{W}_n',
\mathcal{W}_n)\cap B) \times \overline{C} \subset {\rm
St}(\cup\mathcal{U}_n', \mathcal{U}_n)$ for all but finitely many
$n$. Thus $X \times Y $ is rectangular set star Hurewicz.
\end{proof}

\end{document}